\documentclass[11pt]{amsart}
\usepackage[usenames,dvipsnames]{xcolor}
\usepackage{geometry}
\geometry{verbose,tmargin=1.1in,bmargin=1.1in,lmargin=1.1in,rmargin=1.1in}
\usepackage{xspace}

\usepackage[numbers]{natbib}

\usepackage{amsmath,amssymb,amsthm}
\usepackage{enumerate}
\usepackage{aliascnt}
\usepackage{hyperref}
	\hypersetup{colorlinks=true, citecolor=blue, linkcolor=BrickRed, urlcolor=OliveGreen, pdfstartview={FitH}}
	
	\newtheorem{theorem}{Theorem}[section]

	\newaliascnt{lemma}{theorem}
	\newtheorem{lemma}[lemma]{Lemma}
	\aliascntresetthe{lemma}

	\newaliascnt{corollary}{theorem}
	
	\aliascntresetthe{corollary}

	\newaliascnt{proposition}{theorem}
	
	\aliascntresetthe{proposition}

	\theoremstyle{definition}
	\newaliascnt{definition}{theorem}
	
	\aliascntresetthe{definition}

	\newaliascnt{remark}{theorem}
	\newtheorem{remark}[remark]{Remark}
	\aliascntresetthe{remark}

	\newaliascnt{example}{theorem}
	
	\aliascntresetthe{example}

	\usepackage[capitalize,noabbrev]{cleveref}
	\crefname{theorem}{Theorem}{Theorems}
	\Crefname{theorem}{Theorem}{Theorems}
	\crefname{lemma}{Lemma}{Lemmas}
	\Crefname{lemma}{Lemma}{Lemmas}
	\crefname{corollary}{Corollary}{Corollaries}
	\Crefname{corollary}{Corollary}{Corollaries}
	\crefname{proposition}{Proposition}{Propositions}
	\Crefname{proposition}{Proposition}{Propositions}
	\crefname{definition}{Definition}{Definitions}
	\Crefname{definition}{Definition}{Definitions}
	\crefname{remark}{Remark}{Remarks}
	\Crefname{remark}{Remark}{Remarks}
	\crefname{example}{Example}{Examples}
	\Crefname{example}{Example}{Examples}

\title{Uniform Bounds for Digit-Appending Fibonacci Walks}

\author[Scott Duke Kominers]{Scott Duke Kominers}
\address{Harvard Business School; Department of Economics and Center of Mathematical Sciences and Applications, Harvard University; and a16z crypto}
\email{kominers@fas.harvard.edu}
\thanks{I gratefully acknowledge helpful conversations with Ben Golub, Steven~J.~Miller, Ken Ono, and Jesse Shapiro. Additionally, I used LLMs to assist with some of the computations and coding in the preparation of this article, particularly GPT-5-Pro and Claude-Sonnet-4.5 (both accessed via Poe with the support of Quora, where I am an advisor). The problem, methods, and eventual written form are my own; and of course any errors remain my responsibility.}

\subjclass[2000]{Primary: 11B39 / Secondary: 11B37 11A07}
\keywords{Fibonacci numbers, Lucas sequences, digit appending, recurrence sequences}

\begin{document}

\begin{abstract}
Building on the work of \citet{miller2022walking}, we show that it is impossible to ``walk to infinity'' along the Fibonacci sequence in any integer base $b\geq 2$ when at most $N$ digits are appended per step. Our proof method is base-independent, yielding the bound
\[
L \;\leq\; 2N\log_\varphi b \,+\, O(1),
\]
uniformly in the starting term, without relying on base-specific periodicity computations (here, $\varphi=\frac{1+\sqrt{5}}{2}$). Our approach extends to certain Lucas sequences.
\end{abstract}

\maketitle

\section{Introduction}

An \emph{at-most-$N$-digit-appending walk} along the Fibonacci sequence in base $b\geq 2$ proceeds as follows: start from a Fibonacci number $M_0:=F_m$ and, at each step $M_0, M_1, M_2, \ldots$, append between $1$ and $N$ base-$b$ digits to the right (leading zeros in the appended block are allowed), producing 
\[
M_{k+1} \;:=\; b^t \, M_k \,+\, r, \qquad (1\leq t\leq N;\; 0\leq r < b^t),
\]
with the requirement that each $M_k$ be a Fibonacci number. The central question we consider is whether such a walk can continue indefinitely, i.e., \textit{is it possible to walk to infinity along the Fibonacci sequence in base $b$}?

In base $10$, \citet*{miller2022walking} proved that every at-most-$N$-digit-appending walk terminates and obtained sharp bounds on the maximum-possible walk length using base-$10$--specific modular information---specifically, the base-$10$ Pisano period.\footnote{See also~\cite{miller2024walking}, in which \citeauthor*{miller2024walking} examined the walking-to-infinity question for a variety of other number-theoretic sequences, including primes, square-free numbers, and perfect squares.} 

In this paper we give a unified, base-independent proof of termination that avoids any base-specific periodicity. Using only standard Fibonacci identities and a simple rigidity argument, we obtain an explicit bound that is linear in $N$: The maximum number of steps $L$ in a base-$b$ at-most-$N$-digit-appending walk along the Fibonacci sequence is 
\begin{equation}
L \;\leq\; 2N\log_\varphi b \,+\, O(1);
\label{eq:linear-bound}
\end{equation}
in particular, any such walk is finite.

Making the argument base-independent entails a nontrivial trade-off in terms of efficiency: for base $10$, our bound \eqref{eq:linear-bound} is weaker than the $O(\log N)$ bound obtained by~\citet{miller2022walking}, but our method applies verbatim to any base $b\geq 2$ and extends qualitatively to Lucas sequences.

\section{Preliminaries}

We write $(F_n)$ for the \emph{Fibonacci sequence}, defined by $F_0=0$, $F_1=1$, and $F_{n+1}=F_n+F_{n-1}$. We denote $\varphi=\frac{1+\sqrt{5}}{2}$.

We begin by recalling two Fibonacci identities that were core to the argument of \cite{miller2022walking}.

\begin{lemma}[{\cite[Lemma~2.1]{miller2022walking}}]\label{lem:Miller2.1}
For all integers $m\geq 1$ and $k\geq 1$,
\begin{equation}
F_{k+1}\,F_m \;\leq\; F_{m+k} \;\leq\; F_{k+2}\,F_m.
\label{eq:F(k+1)F(m)}
\end{equation}
\end{lemma}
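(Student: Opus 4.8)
The plan is to prove the two-sided bound
\[
F_{k+1}\,F_m \;\leq\; F_{m+k} \;\leq\; F_{k+2}\,F_m
\]
by first establishing the exact identity that governs how Fibonacci indices add, and then reading off the inequalities from the monotonicity of the sequence. The key tool is the well-known addition formula
\[
F_{m+k} \;=\; F_{k+1}\,F_m \,+\, F_k\,F_{m-1},
\]
valid for all $m\geq 1$ and $k\geq 1$. I would either cite this as a standard identity or give a one-line induction on $k$ (the base case $k=1$ reads $F_{m+1}=F_{m}+F_{m-1}$, and the inductive step uses $F_{m+k+1}=F_{m+k}+F_{m+k-1}$ together with the defining recurrence for the coefficients $F_{k+1}+F_k=F_{k+2}$ and $F_k+F_{k-1}=F_{k+1}$).

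Granting the addition formula, the lower bound is immediate: since every term $F_k\,F_{m-1}$ is nonnegative (indeed $F_k\geq 1$ and $F_{m-1}\geq 0$ for $m\geq 1$, $k\geq 1$), we have
\[
F_{m+k} \;=\; F_{k+1}\,F_m \,+\, F_k\,F_{m-1} \;\geq\; F_{k+1}\,F_m.
\]
For the upper bound, I would use that the Fibonacci sequence is nondecreasing, so $F_{m-1}\leq F_m$. Substituting this into the addition formula gives
\[
F_{m+k} \;=\; F_{k+1}\,F_m \,+\, F_k\,F_{m-1} \;\leq\; F_{k+1}\,F_m \,+\, F_k\,F_m \;=\; (F_{k+1}+F_k)\,F_m \;=\; F_{k+2}\,F_m,
\]
where the final step again invokes the defining recurrence $F_{k+1}+F_k=F_{k+2}$. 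This completes both inequalities.

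The argument is essentially mechanical once the addition formula is in hand, so I do not anticipate a genuine obstacle; the only point requiring minor care is the boundary behavior of the indices. In particular, the inequality $F_{m-1}\leq F_m$ is valid for all $m\geq 1$, but one should note the single flat spot $F_0=F_1=1$ so that the claim covers $m=1$ (where $F_{m-1}=F_0=0\leq 1=F_1=F_m$) without incident. I would therefore verify explicitly that the stated ranges $m\geq 1$ and $k\geq 1$ keep all invoked indices (namely $m-1\geq 0$ and $k-1\geq 0$) within the domain where the recurrence and monotonicity hold, and then the two displayed chains of inequalities deliver \eqref{eq:F(k+1)F(m)}.
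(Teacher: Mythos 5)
Your proof is correct; the paper itself imports this lemma from \cite{miller2022walking} without reproving it, and your argument via the addition formula $F_{m+k}=F_{k+1}F_m+F_kF_{m-1}$ followed by nonnegativity (for the lower bound) and $F_{m-1}\leq F_m$ (for the upper bound) is exactly the standard route. The only blemish is the throwaway remark ``$F_0=F_1=1$'', which contradicts the paper's convention $F_0=0$, $F_1=1$; your actual computation $F_{m-1}=F_0=0\leq 1=F_1=F_m$ for $m=1$ is right, so this is a typo rather than a gap.
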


\begin{lemma}[{\cite[Lemma~2.2]{miller2022walking}}]\label{lem:Miller2.2}
For all integers $m\geq k\geq 2$,
\[
F_{m+k} \;=\; \big(F_{k+2}-F_{k-2}\big)F_m \,+\, (-1)^{k+1} F_{m-k}.
\]
\end{lemma}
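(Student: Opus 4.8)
The plan is to first simplify the coefficient $F_{k+2}-F_{k-2}$, thereby recognizing the asserted formula as a standard Fibonacci--Lucas relation, and then to verify that relation by a direct Binet computation.

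First I would observe that, for $k\geq 2$, two applications of the defining recurrence give
\[
F_{k+2}-F_{k-2} \;=\; (F_{k+1}+F_k)-(F_k-F_{k-1}) \;=\; F_{k+1}+F_{k-1},
\]
since $F_{k+2}=F_{k+1}+F_k$ and $F_{k-2}=F_k-F_{k-1}$, so the $F_k$ terms cancel. Writing $L_k:=F_{k-1}+F_{k+1}$ for the $k$-th Lucas number and using $(-1)^{k+1}=-(-1)^k$, the claim becomes equivalent to the symmetric form
\[
F_{m+k} + (-1)^{k}\,F_{m-k} \;=\; L_k\,F_m .
\]

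Next I would prove this equivalent form via Binet's formula. Write $F_n=(\varphi^{n}-\psi^{n})/\sqrt{5}$ and $L_n=\varphi^{n}+\psi^{n}$, where $\psi=\tfrac{1-\sqrt5}{2}$ is the conjugate root; the crucial relation is $\varphi\psi=-1$, so that $(-1)^k=(\varphi\psi)^k=\varphi^{k}\psi^{k}$. This lets me rewrite the cross terms as $(-1)^k\varphi^{m-k}=\varphi^{m}\psi^{k}$ and $(-1)^k\psi^{m-k}=\varphi^{k}\psi^{m}$, giving
\[
F_{m+k}+(-1)^{k}F_{m-k} \;=\; \frac{\varphi^{m+k}-\psi^{m+k}+\varphi^{m}\psi^{k}-\varphi^{k}\psi^{m}}{\sqrt5}\;=\;\frac{(\varphi^{k}+\psi^{k})(\varphi^{m}-\psi^{m})}{\sqrt5}\;=\;L_k F_m,
\]
which is the desired identity.

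I do not anticipate a serious obstacle here; the only points demanding care are the bookkeeping of the signs $(-1)^k$ versus $(-1)^{k+1}$, and checking that the index constraints $m\geq k\geq 2$ keep $F_{k-2}$ and $F_{m-k}$ at nonnegative indices, so that the recurrence-based simplification of $F_{k+2}-F_{k-2}$ is valid term by term. As an alternative that avoids irrational arithmetic, one can instead combine the addition formula $F_{m+k}=F_{m+1}F_k+F_mF_{k-1}$ with the companion relation $(-1)^k F_{m-k}=F_mF_{k+1}-F_{m+1}F_k$ (itself a short consequence of the addition formula together with $F_{-n}=(-1)^{n+1}F_n$); adding these collapses the $F_{m+1}F_k$ terms and leaves $F_m(F_{k-1}+F_{k+1})=L_kF_m$.
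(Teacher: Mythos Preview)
Your argument is correct. The reduction $F_{k+2}-F_{k-2}=F_{k+1}+F_{k-1}=L_k$ is valid for $k\ge 2$, and your Binet computation cleanly verifies the resulting identity $F_{m+k}+(-1)^kF_{m-k}=L_kF_m$; the sign bookkeeping via $\varphi\psi=-1$ is handled accurately, and the factorization $\varphi^{m+k}-\psi^{m+k}+\varphi^m\psi^k-\varphi^k\psi^m=(\varphi^k+\psi^k)(\varphi^m-\psi^m)$ is right. The alternative route through the addition formula and $F_{-n}=(-1)^{n+1}F_n$ is also fine.

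As for comparison with the paper: the paper does not supply a proof of this lemma at all---it is imported verbatim as \cite[Lemma~2.2]{miller2022walking} and treated as a black box. So there is no ``paper's own proof'' to compare against; your write-up simply fills in what the paper leaves to the reference. Your Binet approach is the standard way to establish such Fibonacci--Lucas identities, and the observation that $F_{k+2}-F_{k-2}=L_k$ is exactly the simplification one would expect (and indeed is how the identity is usually stated in the literature).
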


\Cref{lem:Miller2.1} shows that the Fibonacci number at index $m+k$ grows roughly like the product of the $m$-th and $k$-th Fibonacci numbers. Meanwhile, \cref{lem:Miller2.2} shows how to ``jump ahead'' by $k$ positions in the Fibonacci sequence: the answer is almost a multiple of the starting value $F_m$, with a correction term from $k$ steps back.

We also use the elementary growth bound
\begin{equation}\label{eq:phi-bounds}
\varphi^{m-2} \;\leq\; F_m \;\leq\; \varphi^{m-1}\qquad(m\geq 1),
\end{equation}
which follows from Binet's formula.

\section{Bounding Base-$b$ Fibonacci Walks}

We first state our main result (\cref{thm:main}), then derive two intermediate lemmata (\cref{lem:jump-bound,lem:rigidity}), and then finally give the proof of the main theorem.

\begin{theorem}\label{thm:main}
We fix $b\geq 2$ and $N\geq 1$, and define
\[
K \;:=\; \Big\lceil 1 + \log_\varphi(2b^N)\Big\rceil,
\qquad
n_* \;:=\; \max\{n\geq 0:\; F_n \leq b^N-1\}.
\]
No base-$b$, $N$-digit-appending step to a Fibonacci number is possible from any $F_m$ with $m\geq n_*+K+1$.\footnote{Note that $n_* \geq 2$ since $F_2 = 1 \leq b^N - 1$ for $N \geq 1, b \geq 2$.} Consequently, any at-most-$N$-digit-appending walk along Fibonacci numbers in base-$b$ can have at most
\[
L \;\leq\; n_* + K \;\leq\; 2N\log_\varphi b \,+\, \log_\varphi 2 \,+\, 4
\]
steps. In particular, $L=O(N\log_\varphi b)$ uniformly in the starting term.
\end{theorem}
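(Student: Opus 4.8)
To prove \cref{thm:main}, the plan is to analyze a single admissible step and show that it forces a rigid relationship between the number of appended digits and the jump in Fibonacci index, which in turn pins the starting index down to a bounded range. Suppose a step carries $F_m$ to a Fibonacci number $F_{m+k}$ by appending $t$ digits, so that $F_{m+k}=b^tF_m+r$ with $0\le r<b^t\le b^N$. First I would record that $k=1$ is impossible: since $b\ge 2$ and $t\ge 1$, we have $F_{m+1}<2F_m\le b^tF_m$, which would force $r<0$. Hence every step satisfies $k\ge 2$, so \cref{lem:Miller2.2} applies, and writing $D_k:=F_{k+2}-F_{k-2}$ it gives the exact remainder
\[
r \;=\; (D_k-b^t)\,F_m \,+\, (-1)^{k+1}F_{m-k}.
\]

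Next I would bound the jump. Combining the lower estimate $F_{k+1}F_m\le F_{m+k}$ from \eqref{eq:F(k+1)F(m)} with $F_{m+k}=b^tF_m+r<b^t(F_m+1)$ yields $F_{k+1}<b^t(1+1/F_m)\le 2b^N$; together with \eqref{eq:phi-bounds} this gives $\varphi^{k-1}<2b^N$, hence $k<K$. The heart of the argument is then a three-way rigidity analysis of $b^t$ against $D_k$, using that $F_{k+1}\le D_k\le F_{k+2}$ (which follows from $0\le F_{k-2}\le F_k$). If $b^t>D_k$, then $r\le -F_m+F_{m-k}<0$, which is impossible. If $b^t<D_k$, then $r\ge F_m-F_{m-k}\ge F_{m-2}$ (as $k\ge 2$), so the constraint $r<b^t\le b^N$ forces $F_{m-2}<b^N$ and hence $m\le n_*+2$. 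In the borderline case $b^t=D_k$, the remainder collapses to $r=(-1)^{k+1}F_{m-k}$; nonnegativity forces $k$ odd (using $m>k$), and then $r<b^t=D_k\le b^N$ forces $F_{m-k}<b^N$, i.e.\ $m-k\le n_*$, so $m\le n_*+k\le n_*+K-1$. In both surviving cases $m\le n_*+K-1$, contradicting $m\ge n_*+K+1$; this proves that no step is possible from such $F_m$.

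Finally I would convert this into the length bound. Along any walk the Fibonacci indices strictly increase, and by the opening observation each step raises the index by at least $2$; since a step is taken from every term except the last, the index of the penultimate term is at most $n_*+K$. Starting from an index $\ge 0$ and gaining at least $2$ per step gives $2(L-1)\le n_*+K$, hence $L\le 1+\tfrac12(n_*+K)\le n_*+K$. The explicit estimate then follows by inserting $n_*<N\log_\varphi b+2$ and $K\le N\log_\varphi b+\log_\varphi 2+2$, both immediate from \eqref{eq:phi-bounds}.

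I expect the main obstacle to be the rigidity trichotomy, and in particular the equality case $b^t=D_k$, where the remainder is governed by the Lucas-type quantity $D_k=F_{k+2}-F_{k-2}$ and one must exploit both the parity of $k$ and the conversion of $F_{m-k}<b^N$ into the index inequality $m-k\le n_*$ via the definition of $n_*$. The other two cases, the jump bound $k<K$, and the elementary counting step are comparatively routine.
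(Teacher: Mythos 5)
Your argument is correct and is essentially the paper's: the jump bound $F_{k+1}<2b^N$ from \cref{lem:Miller2.1} plus \eqref{eq:phi-bounds}, a sign/size trichotomy on $F_{k+2}-F_{k-2}-b^t$ via \cref{lem:Miller2.2} forcing the rigid case $b^t=F_{k+2}-F_{k-2}$, $k$ odd, $r=F_{m-k}$, and then the index count; the only (harmless) reorganization is that you merge the paper's \cref{lem:jump-bound} and \cref{lem:rigidity} into one case analysis and close the $b^t<F_{k+2}-F_{k-2}$ case by deducing $m\le n_*+2$ directly instead of introducing the threshold $m_*$. One small slip: the inequality $F_{m+1}<2F_m$ used to exclude $k=1$ fails at $m=2$ (in base $2$ the step $F_3=2F_2+0$ is a legitimate one-digit, $k=1$ step), but since this can happen at most once and only at the very start of a walk, your ``at least $2$ per step'' count degrades by at most one and the final bound $L\le n_*+K$ still holds.
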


The proof of \cref{thm:main} relies on two results that tightly constrain the circumstances under which we can reach a Fibonacci number from another Fibonacci number by appending at most $N$ base-$b$ digits. \Cref{lem:jump-bound}, building on \cref{lem:Miller2.2}, shows that such a step cannot jump ``too far ahead'' in the sequence: the jump size $k$ is at most logarithmic in $b^N$. \Cref{lem:rigidity}, meanwhile shows that once a Fibonacci index is large enough, the only way to step to another Fibonacci number is if the scaling coefficient $F_{k+2} - F_{k-2}$ in \eqref{eq:F(k+1)F(m)} exactly equals $b^t$ and the remainder equals $F_{m-k}$; this rigidity arises because large Fibonacci numbers dominate the step expression, leaving no room for variation.

\begin{lemma}\label{lem:jump-bound}
Suppose that $m\geq 1$ and $F_{m+k}=b^t F_m + r$ with $1\leq t\leq N$ and $0\leq r < b^t$. Then
\[
F_{k+1} \;<\; 2\,b^N,
\]
and hence
\[
k \;\leq\; 1 + \log_\varphi(2b^N) \;=\; N\log_\varphi b \,+\, \log_\varphi 2 \,+\, 1.
\]
\end{lemma}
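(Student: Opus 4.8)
The plan is to control the jump size $k$ by bounding the multiplicative factor $b^t F_m/F_{m+k}$ from below and then invoking the growth estimate~\eqref{eq:phi-bounds}. The starting point is the hypothesis $F_{m+k}=b^t F_m + r$ together with the digit-appending constraints $1\le t\le N$ and $0\le r<b^t$. Since $r\ge 0$, I immediately get $F_{m+k}\ge b^t F_m$, so $b^t \le F_{m+k}/F_m$. To turn this into a bound on $k$, I want an upper estimate for the ratio $F_{m+k}/F_m$ purely in terms of $k$; here \cref{lem:Miller2.1} is exactly the right tool, since its right-hand inequality gives $F_{m+k}\le F_{k+2}F_m$, whence $b^t\le F_{k+2}$.

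First I would combine $b^t \le F_{k+2}$ with $t\ge 1$ to obtain $b\le F_{k+2}$, but this alone only bounds $k$ in terms of $b$, not $b^N$; to get the sharper statement I should instead retain $b^t$ and use the other direction. The cleaner route is to observe that the appended-digit condition $r<b^t\le b^N$ forces the ``overshoot'' $F_{m+k}-b^t F_m=r$ to be smaller than $b^N$, and simultaneously $F_{m+k}\ge F_{k+1}F_m$ by the left-hand inequality of~\eqref{eq:F(k+1)F(m)}. Combining these, $F_{k+1}F_m \le F_{m+k}=b^t F_m + r < b^N F_m + b^N=b^N(F_m+1)$. Dividing through by $F_m$ and using $F_m\ge 1$ gives $F_{k+1}<b^N(1+1/F_m)\le 2b^N$, which is precisely the claimed inequality $F_{k+1}<2b^N$.

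From $F_{k+1}<2b^N$ the index bound follows by the lower estimate in~\eqref{eq:phi-bounds}: since $\varphi^{(k+1)-2}\le F_{k+1}$, we have $\varphi^{k-1}\le F_{k+1}<2b^N$, so taking $\log_\varphi$ of both sides yields $k-1<\log_\varphi(2b^N)$, i.e.\ $k<1+\log_\varphi(2b^N)$. Expanding $\log_\varphi(2b^N)=N\log_\varphi b+\log_\varphi 2$ recovers the stated closed form $k\le 1+\log_\varphi(2b^N)=N\log_\varphi b+\log_\varphi 2+1$.

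The only real subtlety is the handling of the additive remainder $r$: I must be careful to bound it by $b^N$ rather than $b^t$ in order to get a bound uniform in $t$, and to absorb the resulting $+b^N$ term into the factor $2$ via the crude estimate $F_m\ge 1$. I expect this last absorption step to be the main (minor) obstacle, since it is where the constant $2$ in $2b^N$ comes from; everything else is a direct substitution of the two Fibonacci inequalities already supplied by \cref{lem:Miller2.1} and~\eqref{eq:phi-bounds}. For small $m$ the estimate $1+1/F_m$ could exceed $2$ only if $F_m<1$, which never happens for $m\ge 1$, so the bound is valid throughout the claimed range.
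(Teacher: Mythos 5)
Your proof is correct and follows essentially the same route as the paper: bound $r$ by $b^N$, absorb the additive term into a factor of $2$ via $F_m\geq 1$, apply the left-hand inequality of \cref{lem:Miller2.1} to get $F_{k+1}<2b^N$, and finish with \eqref{eq:phi-bounds}. The false start in your first paragraph (using the right-hand inequality to get $b^t\le F_{k+2}$) is correctly abandoned and does not affect the argument.
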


\begin{proof}
By hypothesis, we have
\begin{equation}
F_{m+k}=b^t F_m + r \;\leq\; b^N F_m + (b^N-1)\;<\; 2 b^N F_m \qquad(F_m\geq 1).
\label{eq:F(m+k)}
\end{equation}
Using \cref{lem:Miller2.1} and \eqref{eq:F(m+k)}, we see that $F_{k+1}F_m\leq F_{k+m}< 2 b^N F_m$, so $F_{k+1}< 2 b^N$, as desired. Using $F_{k+1}\geq \varphi^{(k+1)-2} = \varphi^{k-1}$ from \eqref{eq:phi-bounds} gives the claimed bound on $k$.
\end{proof}

\begin{lemma}\label{lem:rigidity}
We fix $b\geq 2$ and $N\geq 1$, and let
\[
m_* \;:=\; \Big\lceil \log_\varphi(2b^N) \Big\rceil + 4.
\]
For all $m\geq m_*$, if
\[
F_{m+k} \;=\; b^t F_m + r \qquad (1\leq t\leq N,\; 0\leq r<b^t, \; k\geq 1),
\]
then necessarily $k\geq 2$, $F_{k+2}-F_{k-2} = b^t$, $k$ is odd, and $r=F_{m-k}$.
\end{lemma}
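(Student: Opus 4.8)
The plan is to prove this by combining the jump bound from \cref{lem:jump-bound} with the near-multiplicative jump formula of \cref{lem:Miller2.2}, and then exploiting the fact that large Fibonacci numbers leave essentially no slack. First I would invoke \cref{lem:jump-bound} to conclude that $k \leq 1 + \log_\varphi(2b^N)$, so that $k$ is bounded independently of $m$; in particular $k$ is small relative to $m$ once $m \geq m_*$. I would also dispose of the degenerate cases $k=1$ (and verify $k \geq 2$) quickly: if $k=1$ then $F_{m+1} = b^t F_m + r$, but since $F_{m+1}/F_m \to \varphi < 2 \leq b^t$, for large $m$ this forces a contradiction because $b^t F_m$ already exceeds $F_{m+1}$. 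This handles the claim $k \geq 2$.

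For $k \geq 2$, the heart of the argument is to apply \cref{lem:Miller2.2}, which gives
\[
F_{m+k} \;=\; \big(F_{k+2}-F_{k-2}\big)F_m \,+\, (-1)^{k+1} F_{m-k}.
\]
Setting this equal to $b^t F_m + r$ and rearranging yields
\[
\big(F_{k+2}-F_{k-2} - b^t\big)F_m \;=\; r - (-1)^{k+1} F_{m-k}.
\]
The key observation is that the left-hand side is an integer multiple of $F_m$, while I can control the size of the right-hand side. Indeed, $0 \leq r < b^t \leq b^N$, and by the growth bound \eqref{eq:phi-bounds} together with the bound on $k$, the term $F_{m-k}$ is much smaller than $F_m$ — roughly $F_{m-k} \approx F_m / \varphi^k$, and more crudely $F_{m-k} \leq F_m$. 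The plan is to show that the entire right-hand side has absolute value strictly less than $F_m$ once $m \geq m_*$; since the left-hand side is a multiple of $F_m$, it must then be zero. This forces $F_{k+2} - F_{k-2} = b^t$ \emph{and} $r = (-1)^{k+1} F_{m-k}$ simultaneously.

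The main obstacle, and the step requiring the most care, will be the size estimate showing $|r - (-1)^{k+1}F_{m-k}| < F_m$. I expect I will need both that $r < b^N$ and that $F_{m-k}$ is controlled: using $k \leq 1 + \log_\varphi(2b^N)$ I can bound $F_{m-k}$ from above, but I must check that $b^N$ itself is small compared to $F_m$. This is precisely what the threshold $m_* = \lceil \log_\varphi(2b^N)\rceil + 4$ is engineered to guarantee — the additive constant $4$ provides exactly the margin needed so that $F_m \geq \varphi^{m-2}$ dominates $b^N$ (and hence dominates the full right-hand side) with room to spare. I would verify the inequality $b^N + F_{m-k} \leq b^N + F_m / \varphi^{k} < F_m$ for $m \geq m_*$ by substituting the $\varphi$-bounds and checking the resulting elementary inequality. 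Once the right-hand side is forced to vanish, the two equalities $F_{k+2}-F_{k-2}=b^t$ and $r = (-1)^{k+1}F_{m-k}$ drop out, and finally the sign constraint: since $r \geq 0$ and $r$ equals $(-1)^{k+1}F_{m-k}$ with $F_{m-k} > 0$, the sign $(-1)^{k+1}$ must be positive, i.e. $k$ is odd, and then $r = F_{m-k}$, completing the proof.
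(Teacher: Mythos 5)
Your proposal is correct and follows essentially the same route as the paper: rule out $k=1$ directly, use \cref{lem:jump-bound} to guarantee $m>k$ so that \cref{lem:Miller2.2} applies, and then force $\Delta := F_{k+2}-F_{k-2}-b^t$ to vanish by a size estimate before reading off the parity of $k$ from the sign of $r$. The only difference is cosmetic: where you bound $\lvert r-(-1)^{k+1}F_{m-k}\rvert < F_m$ in one stroke to kill the integer multiple $\Delta F_m$, the paper isolates $r=\Delta F_m+(-1)^{k+1}F_{m-k}$ and runs a four-way case check on the sign of $\Delta$ and the parity of $k$ to contradict $0\le r<b^t$ --- both hinging on the same estimate $F_{m-2}>b^N$ supplied by $m\ge m_*$.
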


\begin{proof}
First, we note that if $k=1$, then by hypothesis, $b^t F_m + r = F_{m+1} = F_m + F_{m-1}$, so
\begin{equation}
(b^t - 1)F_m \;=\; F_{m-1} - r.
\label{eq:k=1case}
\end{equation}
For $t\geq 1$, we have $b^t \geq 2$; hence, the left side of \eqref{eq:k=1case} is at least $F_m$, while the right side satisfies $F_{m-1} - r \leq F_{m-1} < F_m$. This is impossible, so we must have $k \neq 1$; thus, $k\geq 2$.

Now, for $k\geq2$, we set $\Delta := F_{k+2}-F_{k-2}-b^t$. We suppose for the sake of seeking a contradiction that $\Delta\neq 0$. Then, since $\Delta\in\mathbb{Z}$, we must have $\Delta\geq 1$ or $\Delta\leq -1$. 

By \cref{lem:jump-bound}, we must have $k \leq K := \lceil 1 + \log_\varphi(2b^N)\rceil$. Since $m\geq m_*=\lceil \log_\varphi(2b^N)\rceil+4 > K$ and $k\leq K$, we have $m>k$, so in particular $m\geq k\geq 2$ and \cref{lem:Miller2.2} applies. Thus, we have
\[
F_{m+k} \;=\; \big(F_{k+2}-F_{k-2}\big)F_m \,+\, (-1)^{k+1}F_{m-k};
\]
hence, our hypothesis that $F_{m+k} = b^t F_m + r$ implies that
\begin{equation}
r \;=\; \big(F_{k+2}-F_{k-2}-b^t\big)F_m \,+\, (-1)^{k+1}F_{m-k}\;=\; \Delta\,F_m \,+\, (-1)^{k+1} F_{m-k}.
\label{eq:r-isolation}
\end{equation}

Moreover, for $m\geq m_*$, \eqref{eq:phi-bounds} gives
\begin{equation}
F_{m-2} \;\geq\; \varphi^{(m-2)-2} \;\geq\; \varphi^{m-4} \;\geq\; \varphi^{\lceil \log_\varphi(2b^N)\rceil} \;\geq\; 2b^N \;>\; b^N \;\geq\; b^t.
\label{eq:F(m-2)-bound}
\end{equation}

And finally, as $m>k$, we have 
\begin{equation}
F_{m-k}> 0.
\label{eq:F(m-k)-bound}
\end{equation}

If $\Delta\geq 1$ then \eqref{eq:r-isolation} gives:
\[
\begin{cases}
\text{$k$ odd:}& r \;\ge\; F_m + F_{m-k} \;\ge\; b^t \qquad \text{(by \eqref{eq:F(m-2)-bound} and \eqref{eq:F(m-k)-bound})},\\[3pt]
\text{$k$ even:}& r \;\ge\; F_m - F_{m-k} \;\ge\; F_m - F_{m-1} \;=\; F_{m-2} \;>\; b^t \qquad \text{(by \eqref{eq:F(m-2)-bound})},
\end{cases}
\]
contradicting our hypothesis that $r<b^t$.

Meanwhile, if $\Delta\leq -1$ then \eqref{eq:r-isolation} gives:
\[
\begin{cases}
\text{$k$ odd:}& r \;\le\; -F_m + F_{m-k} \;\le\; -F_m + F_{m-1} \;=\; -F_{m-2} \;<\; 0,\\[3pt]
\text{$k$ even:}& r \;\le\; -F_m - F_{m-k} \;<\; 0,
\end{cases}
\]
contradicting our hypothesis that $r\geq 0$.

Thus, we see that we must have $\Delta=0$, i.e., $F_{k+2}-F_{k-2} = b^t$, and then $r = (-1)^{k+1} F_{m-k}$. Since $r\geq 0$ (by hypothesis) and $F_{m-k}>0$ (by \eqref{eq:F(m-k)-bound}), we must have $k$ odd, so $r=F_{m-k}$.
\end{proof}

With \cref{lem:jump-bound,lem:rigidity} in hand, we can now combine them to prove our main result.

\begin{proof}[Proof of \cref{thm:main}]
Let $m\geq n_*+K+1$. Suppose, for the sake of seeking a contradiction, that a step to another Fibonacci number appending at most $N$ base-$b$ digits is possible, i.e., there exist $k$, $t$ ($1\leq t\leq N)$, and $r$ ($0\leq r<b^t$) such that
\[
F_{m+k}=b^t F_m + r.
\]
By \cref{lem:jump-bound}, we know that $k\leq K$. Since $n_*\geq 2$ (as $b^N-1\geq 1=F_2$), we have
\[
m \;\geq\; n_* + K + 1 \;\geq\; K + 3 \;=\; \big\lceil 1 + \log_\varphi(2b^N)\big\rceil + 3 \;\geq\; \big\lceil \log_\varphi(2b^N)\big\rceil + 4 \;=\; m_*,
\]
so \cref{lem:rigidity} applies and yields $k\geq 2$, $F_{k+2}-F_{k-2}=b^t$, and $r=F_{m-k}$. Consequently, since $m\geq n_*+K+1$ and $k\leq K$, we have
\begin{equation}
r \;=\; F_{m-k} \;\geq\; F_{(n_*+K+1)-K} \;=\; F_{n_*+1}.
\label{eq:r-to-n*}
\end{equation}

By the definition of $n_*$ as the largest index $n$ such that $F_{n}\leq b^N - 1$, we have $F_{n_*+1} > b^N - 1$; hence, $F_{n_*+1} \geq b^N$. Thus \eqref{eq:r-to-n*} implies that $r \geq b^N > b^t - 1$, i.e., $r > b^t - 1$. Since $r < b^t$ and $r$ is an integer, we have reached a contradiction. Thus, no step to another Fibonacci number appending at most $N$ base-$b$ digits is possible once $m\geq n_*+K+1$.

Finally, to bound the walk length $L$ from an arbitrary starting index $m_0$: We first note that, by the preceding argument, if $m_0 \geq n_* + K + 1$, then $L=0$. Otherwise, if $m_0 < n_* + K + 1$, then each valid step increases the index by at least $1$ (as $k\geq 1$ in any step), so after at most $(n_* + K + 1 - m_0)$ steps we reach an index $\geq n_*+K+1$, at which no further step is possible. Hence $L \leq n_* + K$.

Using \eqref{eq:phi-bounds}, we have $n_* \leq 2 + N\log_\varphi b$ and $K \leq 2 + N\log_\varphi b + \log_\varphi 2$, which gives
\[
L \;\leq\; n_* + K \;\leq\; 4 + 2N\log_\varphi b + \log_\varphi 2 \;=\; 2N\log_\varphi b \,+\, \log_\varphi 2 \,+\, 4,
\]
as claimed.
\end{proof}

\begin{remark}[Comparison with the \citet{miller2022walking} Base-$10$ Bounds]
For $b=10$, \cref{thm:main} gives $L \leq 2N\log_\varphi 10 + O(1) \approx 9.57N + O(1)$, whereas~\cite{miller2022walking} gives an $O(\log N)$ bound with sharp constants by using the periodicity of the Fibonacci sequence modulo 10. Our method trades sharpness for universality: it applies to any base without periodicity computations.
\end{remark}

\section{Extension to Other Second-Order Linear Recurrences}

The proof strategy in \cref{thm:main} relies on three key properties of the Fibonacci sequence:
\begin{enumerate}[(i)]
	\item an \emph{addition formula} expressing $F_{m+k}$ in terms of $F_m$, $F_k$, and $F_{m-k}$ (\cref{lem:Miller2.2});
	\item a \emph{product bound} showing $F_{m+k}$ grows like $F_m \cdot F_k$ (\cref{lem:Miller2.1}); and
	\item \emph{exponential growth} with explicit constants (equation~\eqref{eq:phi-bounds}).
\end{enumerate}
These properties are not unique to Fibonacci numbers. Indeed, they hold for a much broader family of sequences---and as we show, all such sequences satisfy a version of our \cref{thm:main}.

We consider the family of sequences known as \emph{Lucas sequences}. For integer parameters $(P,Q)$, the \emph{Lucas sequence of the first kind $U_n = U_n(P,Q)$} and the companion \emph{Lucas sequence of the second kind $V_n = V_n(P,Q)$} are defined by
\begin{gather*}
U_0=0,\; U_1=1,\quad U_{n+1}=P\,U_n - Q\,U_{n-1},\\
V_0=2,\; V_1=P,\quad V_{n+1}=P\,V_n - Q\,V_{n-1},
\end{gather*}
respectively. The Fibonacci numbers arise in the case $(P,Q)=(1,-1)$, yielding $U_n=F_n$ and $V_n=L_n$, where $(L_n)$ denotes the classical Lucas number sequence.

Throughout this section, when we refer to ``the Lucas sequence'' $(U_n)$ without qualification, we mean the sequence of the first kind. (The sequence $(V_n)$ appears in our proofs through the addition formula relating $U_{m+k}$ to $U_m$, $V_k$, and $U_{m-k}$.)

\subsection{Lucas Sequence Properties}

When the characteristic polynomial $x^2 - Px + Q = 0$ has distinct real roots $\rho > 1$ and $\sigma = Q/\rho$, the sequence $(U_n)$ exhibits exponential growth---the condition $|Q|=1$ ensures that $|\sigma| = \rho^{-1} < 1$, so the smaller root's contribution decays exponentially. Moreover, all Lucas sequences satisfy versions of the addition and product formulas satisfied by the Fibonacci numbers.

\begin{lemma}\label{lem:lucas-properties}
Let $(U_n)$ be a Lucas sequence with integer parameters $(P,Q)$ satisfying $|Q|=1$ and $P^2 - 4Q > 0$. Let $\rho > 1$ be the larger root of $x^2 - Px + Q = 0$.  Then:
\begin{enumerate}[(a)]
	\item \label{pt:add} For all $m \geq k \geq 0$, we have
	\begin{equation}
	U_{m+k} \;=\; U_m V_k \;-\; Q^k\, U_{m-k};
	\label{eq:lucas-add}
	\end{equation}
	moreover, since $|Q|=1$, the coefficient $Q^k$ is bounded (indeed, $Q^k \in \{-1,+1\}$).

	\item \label{pt:prod} There exist constants $A, B > 0$ depending only on $(P,Q)$ such that
	\[
	A\, U_{k+1}\,U_m \;\leq\; U_{m+k} \;\leq\; B\, U_{k+2}\, U_m\qquad(m\geq k\geq 1).
	\]

	\item \label{pt:exp} There exist constants $c_1, c_2 > 0$ depending only on $(P,Q)$ such that
	\[
	c_1 \rho^{n-1} \;\leq\; U_n \;\leq\; c_2 \rho^{n-1}\qquad(n\geq 1).
	\]
\end{enumerate}
\end{lemma}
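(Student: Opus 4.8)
The plan is to derive all three parts from the Binet-type closed forms for $U_n$ and $V_n$. Because $P^2 - 4Q > 0$, the characteristic polynomial $x^2 - Px + Q$ has distinct real roots $\rho > 1$ and $\sigma = Q/\rho$, satisfying $\rho + \sigma = P$ and $\rho\sigma = Q$. Solving the two linear recurrences against the stated initial conditions gives
\[
U_n \;=\; \frac{\rho^n - \sigma^n}{\rho - \sigma}, \qquad V_n \;=\; \rho^n + \sigma^n \qquad (n \geq 0).
\]
The hypothesis $|Q| = 1$ forces $|\sigma| = |Q|/\rho = \rho^{-1} < 1$, so the $\sigma$-contribution decays geometrically; this decay is the engine behind both the growth estimate and the product bound. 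Note also that $\rho - \sigma = \sqrt{P^2 - 4Q} > 0$.

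For part (c), I would divide the Binet formula for $U_n$ by $\rho^{n-1}$ to obtain
\[
\frac{U_n}{\rho^{n-1}} \;=\; \frac{\rho - \sigma^n \rho^{1-n}}{\rho - \sigma}.
\]
For $n \geq 1$ the correction satisfies $|\sigma^n \rho^{1-n}| = \rho^{-n}\rho^{1-n} = \rho^{1-2n} \leq \rho^{-1}$, so the numerator lies in the interval $[\rho - \rho^{-1},\, \rho + \rho^{-1}]$, which is bounded away from $0$ precisely because $\rho > 1$. Dividing through by $\rho - \sigma > 0$ then yields the claimed two-sided bound with explicit constants $c_1 = (\rho - \rho^{-1})/(\rho - \sigma)$ and $c_2 = (\rho + \rho^{-1})/(\rho - \sigma)$, each depending only on $(P,Q)$.

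For part (a), I would substitute the closed forms directly. Expanding $U_m V_k$ produces the four terms $\rho^{m+k}$, $\rho^m\sigma^k$, $-\sigma^m\rho^k$, and $-\sigma^{m+k}$, all divided by $\rho - \sigma$; writing $Q^k = (\rho\sigma)^k$ shows that $Q^k U_{m-k} = (\rho^m\sigma^k - \sigma^m\rho^k)/(\rho - \sigma)$, so subtracting it cancels the two cross terms and leaves $(\rho^{m+k} - \sigma^{m+k})/(\rho - \sigma) = U_{m+k}$. (Alternatively, one can induct on $k$ using the common recurrence satisfied by $U$ and $V$, with base cases $k = 0$ and $k = 1$.) The bound $Q^k \in \{-1, +1\}$ is immediate from $|Q| = 1$. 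Part (b) is then a purely formal consequence of part (c): sandwiching each of $U_{m+k}$, $U_{k+1}$, $U_{k+2}$, and $U_m$ between $c_1\rho^{(\cdot)-1}$ and $c_2\rho^{(\cdot)-1}$ reduces both inequalities to comparisons of equal powers of $\rho$, yielding admissible constants such as $A = c_1/c_2^2$ and $B = c_2/(c_1^2\rho)$.

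I do not expect a serious obstacle; the one point demanding genuine care is the sign bookkeeping in part (c). When $Q = -1$ the smaller root $\sigma$ is negative, so the correction term $\sigma^n\rho^{1-n}$ alternates in sign, and one must confirm that the numerator stays uniformly \emph{positive} rather than merely bounded in modulus. This is exactly what the estimate $\rho - \rho^{-1} > 0$ secures, valid because $\rho > 1$; the same inequality is what keeps $c_1 > 0$.
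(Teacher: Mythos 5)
Your proposal is correct and follows essentially the same route as the paper: parts (a) and (c) are the standard Binet-formula facts the paper cites to Ribenboim and Koshy (you simply verify them directly, including the sign check that keeps $c_1>0$ when $Q=-1$), and your derivation of part (b) from part (c) uses exactly the paper's constants $A=c_1/c_2^2$ and $B=c_2/(c_1^2\rho)$. No gaps.
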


\begin{proof}
Part~\ref{pt:add} is a standard identity in the theory of Lucas sequences (see, e.g., \cite[Equation~(2.8)]{ribenboim2000my}). Part~\ref{pt:exp} follows from Binet-type formulas for Lucas sequences (see \cite[Theorem~10.1]{koshy2019fibonacci} or \cite[Equation~(2.1)]{ribenboim2000my}).

For Part~\ref{pt:prod}, we use Part~\ref{pt:exp}: Since $U_{m+k} \geq c_1 \rho^{m+k-1}$ and $U_m U_{k+1} \leq c_2^2 \rho^{m+k-1}$, we have
\[
U_{m+k} \;\geq\; \frac{c_1}{c_2^2} \, U_m U_{k+1},
\]
giving the lower bound with $A = \frac{c_1}{c_2^2}$.  Similarly, since $U_{m+k} \leq c_2 \rho^{m+k-1}$ and $U_m U_{k+2} \geq c_1^2 \rho^{m+k}$, we have
\[
U_{m+k} \;\leq\; \frac{c_2}{c_1^2 \rho} \, U_m U_{k+1},
\]
proving the upper bound with $B = \frac{c_2}{c_1^2 \rho}$.
\end{proof}

\subsection{Walking Along a Lucas Sequence}

As the intuition given at the start of this section suggests, the three properties established in \cref{lem:lucas-properties} imply a version of \cref{thm:main}. 

\begin{theorem}\label{thm:lucas}
Fix $b\geq 2$ and $N\geq 1$, and let $(U_n)$ be a Lucas sequence with integer parameters $(P,Q)$ satisfying:
\begin{itemize}
\item $|Q|=1$ (ensuring the tail term in the addition formula is bounded);
\item $P^2 - 4Q > 0$ (ensuring exponential growth); and
\item $U_n > 0$ for all sufficiently large $n$ (ensuring positivity).
\end{itemize}

Then every at-most-$N$-digit-appending walk on $(U_n)$ terminates after at most $L$ steps, where
\[
L \;\leq\; C_1\, N \log_\rho b \,+\, C_2
\]
for constants $C_1,C_2>0$ depending only on $(P,Q)$.
\end{theorem}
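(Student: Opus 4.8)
The plan is to mirror the proof of \cref{thm:main} verbatim, replacing each Fibonacci-specific ingredient with its Lucas-sequence counterpart from \cref{lem:lucas-properties} and absorbing the now-present multiplicative constants $A,B,c_1,c_2$ into the final constants $C_1,C_2$. First I would establish a Lucas analogue of \cref{lem:jump-bound}: assuming $U_{m+k}=b^t U_m + r$ with $1\le t\le N$ and $0\le r<b^t$, the hypothesis gives $U_{m+k}<2b^N U_m$ (using $U_m\ge 1$ for large $m$), and the product bound \cref{lem:lucas-properties}\ref{pt:prod} gives $A\,U_{k+1}U_m\le U_{m+k}$, so $U_{k+1}<\frac{2}{A}b^N$. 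The exponential lower bound $U_{k+1}\ge c_1\rho^{k}$ from \cref{lem:lucas-properties}\ref{pt:exp} then yields $k\le \log_\rho(2b^N/(Ac_1))=N\log_\rho b+O(1)$, so the jump is still logarithmically bounded, with all constants depending only on $(P,Q)$.

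Next I would prove the rigidity analogue of \cref{lem:rigidity}. The key structural input is the addition formula \eqref{eq:lucas-add}, $U_{m+k}=U_m V_k - Q^k U_{m-k}$, which plays the role \cref{lem:Miller2.2} played before. Setting $\Delta:=V_k-b^t$, the hypothesis $U_{m+k}=b^tU_m+r$ rearranges to $r=\Delta\,U_m-Q^k U_{m-k}$, exactly paralleling \eqref{eq:r-isolation}. Since $|Q|=1$, the tail term $Q^kU_{m-k}$ has absolute value $U_{m-k}$, which for large $m$ is dominated by $U_m$; more precisely, using \cref{lem:lucas-properties}\ref{pt:exp} one gets $U_m\ge c_1\rho^{m-1}$ and $U_{m-k}\le c_2\rho^{m-k-1}$, so $U_m-U_{m-k}\ge c_1\rho^{m-1}-c_2\rho^{m-k-1}$, which exceeds $b^t$ once $m$ is larger than some threshold $m_*=N\log_\rho b+O(1)$ (depending on $(P,Q)$ through $c_1,c_2,\rho$). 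Then, just as before, if $\Delta\ge 1$ the right side of the $r$-equation is at least $U_m-U_{m-k}>b^t>r$, and if $\Delta\le -1$ it is at most $-(U_m-U_{m-k})<0\le r$ (the sign bookkeeping splits on the sign of $Q^k$ rather than on parity, but the two cases are handled identically to the Fibonacci argument). Both cases contradict $0\le r<b^t$, forcing $\Delta=0$, i.e.\ $V_k=b^t$ and $r=-Q^k U_{m-k}$; positivity of $r$ then pins down the sign and gives $r=U_{m-k}$.

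Finally I would assemble the termination bound as in the proof of \cref{thm:main}. Define $n_*:=\max\{n:U_n\le b^N-1\}$, which satisfies $n_*=N\log_\rho b+O(1)$ by \cref{lem:lucas-properties}\ref{pt:exp}, and set $K:=\lceil\log_\rho(2b^N/(Ac_1))\rceil+O(1)$ from the jump bound. For $m\ge n_*+K+1$, any valid step would force (by rigidity) $r=U_{m-k}\ge U_{n_*+1}>b^N-1$, hence $r\ge b^N>b^t-1$, contradicting $r<b^t$; so no step is possible from such $m$. Since each step advances the index by at least $1$, the walk has length $L\le n_*+K=C_1 N\log_\rho b+C_2$ for suitable $(P,Q)$-dependent constants. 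The main obstacle, relative to the clean Fibonacci case, is the careful management of the multiplicative constants $A,B,c_1,c_2$: whereas the Fibonacci proof enjoyed the exact identity $V_k-b^t\in\mathbb{Z}$ together with sharp bounds $\varphi^{m-2}\le F_m\le\varphi^{m-1}$ that made the threshold explicit, here I must verify that the gap $U_m-U_{m-k}$ genuinely dominates $b^t$ for all $m\ge m_*$ \emph{uniformly}, which requires that the crude constant-laden bounds still leave $\Delta=0$ as the only integer possibility; this hinges on the fact that $V_k$ is an integer (so $\Delta\in\mathbb{Z}$) and on choosing $m_*$ large enough to swamp the constants, and I would need to confirm positivity $U_{m-k}>0$ holds on the relevant range given only the ``$U_n>0$ for large $n$'' hypothesis.
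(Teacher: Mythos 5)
Your proposal is correct and follows essentially the same route as the paper's proof sketch: the same three-part structure (jump bound via \cref{lem:lucas-properties}\ref{pt:prod} and \ref{pt:exp}, rigidity via the addition formula \eqref{eq:lucas-add} with $\Delta := V_k - b^t \in \mathbb{Z}$, and termination via $n_*$ and the remainder contradiction), with the constants absorbed exactly as the paper does. The only point where you are slightly less precise than the paper is the case $Q=1$: there $Q^k=1$ for all $k$, so $\Delta=0$ forces $r=-U_{m-k}<0$ and hence \emph{no} valid step exists for large $m$ (rather than the sign being ``pinned down'' to give $r=U_{m-k}$); this only strengthens the termination conclusion, so it does not affect the result.
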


\begin{proof}[Proof Sketch]
The proof follows the same strategy as in our proof of \cref{thm:main}, replacing Fibonacci-specific identities with their Lucas sequence analogues from \cref{lem:lucas-properties}:

\begin{description}
\item[Index jump bound] Using the product comparability from Part~\ref{pt:prod} of~\cref{lem:lucas-properties}, we see that if $U_{m+k} = b^t U_m + r$ with $r < b^t \leq b^N$, then $U_{m+k} < 2b^N U_m$. This gives $A U_{k+1} U_m \leq 2b^N U_m$; hence, $U_{k+1} < 2b^N/A$. Combined with the exponential growth bound from Part~\ref{pt:exp} of~\cref{lem:lucas-properties}, we obtain
\[
k \;\leq\; C'_1 N\log_\rho b \,+\, C'_2=:K^{(U_n)}
\]
for constants $C'_1, C'_2$ depending on $(P,Q)$.

\item[Rigidity] We choose a threshold $m_*^{(U_n)}$ so that, for all $m \geq m_*^{(U_n)}$: (i) $U_{m-2} > b^N$; (ii) $U_m>0$ and $U_n$ is strictly increasing for all $n \geq m$; and (iii) if $Q=1$, then also $U_m - U_{m-1} > b^N$. (Such an $m_*^{(U_n)}$ exists by the exponential growth in Part~\ref{pt:exp} of~\cref{lem:lucas-properties}, combined with our positivity hypothesis.) Now, we suppose that for some $m \ge m_*^{(U_n)}$ there is a valid step $U_{m+k} = b^t U_m + r$ with $1 \leq t \le N$ and $0 \leq r < b^t$. By the index-jump bound, $k$ is bounded independently of $m$; in particular, for $m \ge m_*^{(U_n)}$ we have $m>k$. Using the addition formula (Part~\ref{pt:add} of~\cref{lem:lucas-properties}), we then obtain
\[
r \;=\; \big(V_k - b^t\big) U_m \;-\; Q^k\, U_{m-k}.
\]
Taking $\Delta := V_k - b^t \in \mathbb{Z}$, for $m \ge m_*^{(U_n)}$, the main term $\Delta U_m$ dominates while $U_{m-k} \le U_{m-1}$; a brief sign/size check as in the proof of~\cref{lem:rigidity} shows that $\Delta \geq 1$ would force $r \geq b^t$ and $\Delta \leq -1$ would force $r < 0$---both impossible. Hence, like in the proof of~\cref{lem:rigidity}, we must have $\Delta=0$, which implies rigid structure on the specific Lucas sequence member reached by a valid step: $V_k = b^t$, and therefore $r = -Q^k U_{m-k}$. Then, because $U_{m-k}>0$ and $r \ge 0$, we conclude that if $Q=-1$, then $k$ must be odd and $r = U_{m-k}$ (the exact Fibonacci analogue); if $Q=1$, then $r = -U_{m-k} < 0$, which is impossible---so no step can occur for $m \ge m_*^{(U_n)}$ in the case $Q=1$. 

\item[Termination] We define $n_*^{(U_n)} := \max\{n\geq 0: U_n \leq b^N-1\}$. For $m \geq n_*^{(U_n)} + K^{(U_n)} + 1$, rigidity forces the remainder to exceed $b^N$, contradicting the digit-appending constraint. As each step increases the index by at least $1$, the walk terminates in at most $n_*^{(U_n)} + K^{(U_n)} + 1= O_{P,Q}(N\log_\rho b)$ steps.\qedhere
\end{description}
\end{proof}

\subsection{Illustration and Discussion}

The Fibonacci sequence satisfies all three conditions of \cref{thm:lucas} with $(P,Q) = (1,-1)$, so our \cref{thm:main} is a special case of the result (with explicitly derived constants). Meanwhile, the \emph{Pell numbers} $P_n$ (defined by $P_0 = 0$, $P_1 = 1$, and $P_{n+1} = 2P_n + P_{n-1}$) form a Lucas sequence with $(P,Q) = (2,-1)$, satisfying $|Q|=1$ and growing like $\rho^n$ where $\rho = 1 + \sqrt{2} \approx 2.414$. Therefore, \cref{thm:lucas} guarantees that any at-most-$N$-digit-appending walk on the Pell sequence terminates in $O(N\log b)$ steps.

It also seems likely that analogous identities and rigidity phenomena may hold for higher-order linear recurrences, although we do not pursue that here. The key obstacle is that for order $\ell>2$, the analogue of \eqref{eq:lucas-add} would involve $\ell$ terms, so the rigidity step would need to match $\ell-1$ coefficients simultaneously, which would require more delicate analysis.

\end{document}